     \def\section{\@startsection{section}{1}%
     \z@{.7\linespacing\@plus\linespacing}{.5\linespacing}%
     {\bfseries
     \centering
     }}
     \def\@secnumfont{\bfseries}
\newcommand{\R}{\mathbb R}
\newcommand{\RR}{\mathbb R}
\newcommand{\E}{\mathbb E}
\newcommand{\1}{{\bf 1}}
\newcommand{\HH}{\mathfrak H}
\newtheorem{theorem}{Theorem}[section]
\newtheorem{lemma}[theorem]{Lemma}
\newtheorem{proposition}[theorem]{Proposition}
\theoremstyle{definition}
\theoremstyle{remark}
\numberwithin{equation}{section}
\begin{document}
\title[]{A Central Limit Theorem for the stochastic heat equation}

\author[J. Huang]{Jingyu Huang}
\address{University of Birmingham, School of Mathematics, UK}
\email{j.huang.4@bham.ac.uk}

\author[D. Nualart]{David Nualart} \thanks {D. Nualart is supported by NSF Grant DMS 1811181.}
\address{University of Kansas, Department of Mathematics, USA}
\email{nualart@ku.edu}

\author[L. Viitasaari]{Lauri Viitasaari}
\address{University of Helsinki, Department of Mathematics and Statistics, Finland}
\email{lauri.viitasaari@iki.fi}

\begin{abstract}
We consider the one-dimensional stochastic heat equation driven by a multiplicative space-time white noise. We show that the spatial integral of the solution from $-R$ to $R$ converges in total variance distance to a standard normal distribution as $R$ tends to infinity, after renormalization.   We also show a functional version of this central limit theorem.
\end{abstract}

\maketitle

\medskip\noindent
{\bf Mathematics Subject Classifications (2010)}: 	60H15, 60H07, 60G15, 60F05.

\medskip\noindent
{\bf Keywords:} Stochastic heat equation, central limit theorem, Malliavin calculus, Stein's method. 

\allowdisplaybreaks

\section{Introduction}
We consider the one-dimensional stochastic heat equation  
\begin{equation}
\label{eq:heat-equation}
\frac{\partial u}{\partial t} = \frac12 \Delta u + \sigma(u) \dot{W}
\end{equation}
on $\R_+\times \R$, where $\dot{W}$ is a space-time Gaussian white noise, with initial condition $u_0(x) = 1$. 
The coefficient $\sigma$ is a Lipschitz function.   

It is well-known (see, for instance,  \cite{Walsh})
that this equation has a unique  mild solution, which is adapted to the filtration generated by $W$,  such that $\E [|u(t,x)|^2] < \infty$ and it satisfies the evolution equation
\begin{equation}\label{eq: mild}
u(t,x) = 1+ \int_0^t \int_{\RR} p_{t-s}(x-y) \sigma(u(s,y))W(ds,dy)\,, 
\end{equation}
where in the right hand side the stochastic integral is in the sense of Walsh, and $p_t(x)=(2\pi t)^{-1/2}e^{-x^2/(2t)}$ is the heat kernel. 

 In this paper we are interested in the  asymptotic behavior  as $R$ tends to infinity of the quantity 
\begin{equation}
\label{eq:quantity-of-interest}
F_R(t):= \frac{1}{\sigma_R}\left(\int_{-R}^R u(t,x)dx - 2R\right),
\end{equation}
where $R>0$,  $u(t,x)$ is the solution to \eqref{eq:heat-equation} and $\sigma_R^2 = {\rm Var}\left(\int_{-R}^R u(t,x)dx\right)$. 
 
From  equation (\ref{eq: mild}) and the properties of the heat kernel, it follows that the solution to equation (\ref{eq:heat-equation}) satisfies a localization property. This means that, for any fixed $t>0$,  the random variable $u(t,x)$ essentially depends on the noise in a small interval $[x-\epsilon, x+\epsilon]$. This property has been extensively used in the literature, see for example, \cite{CKK, CJK, CJKS}.  

 In particular, for the parabolic Anderson model ($\sigma(u)=u$), it  is shown in \cite{CJK} that for each fixed $t>0$, almost surely, the solution $u(t,x)$ develops high peaks along the $x$-axis. More precisely, it holds that, almost surely
\begin{equation*}
0 < \limsup_{R \to \infty}  \frac{\max_{|x|\leq R} \log u(t,x)}{(\log R)^{2/3}} < \infty\,. 
\end{equation*}
The basic idea in \cite{CJK} to show this result is that one can define a ''localized version'' of   equation \eqref{eq:heat-equation} with solution $U(t,x)$, such that, whenever $x_i$ and $x_j \in \RR$ are far apart for $i\neq j$, $U(t,x_i)$, $i = 0, \pm 1, \pm 2, \dots$, are i.i.d. random variables, and also $U(t,x)$ and $u(t,x)$ are close in certain sense. Since a rare event (high peak in this case) will happen with high probability if there are enough independent random variables, i.e., $U(t,x_i)$, $i = 0, \pm1, \pm2, \dots$, one can see that $u(t,x)$, which is close to $U(t,x)$, also develops high peaks. 

Following this idea, the spatial integral $\int_R^R u(t,x) dx$ is similar to a sum of i.i.d. random variables and we expect that certain central limit theorem holds in this case. To be more precise, our first result is the following quantitative central limit theorem:
\begin{theorem}
\label{thm:TV-distance}
Suppose that $u(t,x)$ is the mild solution to equation (\ref{eq:heat-equation}) and let $F_R(t)$ be given by (\ref{eq:quantity-of-interest}). Suppose that $\sigma_R>0$. 
Let $d_{TV}$ denote the total variation distance and let $Z\sim \mathcal{N}(0,1)$. Then there exists a constant $C$, depending only on $t$, such that 
$$
d_{TV}\left( F_R(t),Z\right) \leq \frac{C}{\sqrt{R}}.
$$
\end{theorem}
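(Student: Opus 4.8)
The plan is to use the Malliavin--Stein method, exploiting the divergence structure of the spatial integral. Since $u_0\equiv 1$ and the stochastic integral in \eqref{eq: mild} has zero mean, $\E[u(t,x)]=1$, so that $F_R(t)$ is centered and, by the choice of $\sigma_R$, has unit variance. Writing $\ch=L^2(\R_+\times\R)$ for the Hilbert space associated with the white noise $W$, the first step is to represent the un-normalized functional as a divergence: from \eqref{eq: mild},
\begin{equation*}
G_R:=\int_{-R}^R u(t,x)\,dx - 2R = \int_0^t\int_\R \Big(\int_{-R}^R p_{t-s}(x-y)\,dx\Big)\sigma(u(s,y))\,W(ds,dy)=\delta(v_R),
\end{equation*}
where $v_R(s,y)=\varphi_R(s,y)\sigma(u(s,y))$ and $\varphi_R(s,y)=\int_{-R}^R p_{t-s}(x-y)\,dx$; here $\delta$ coincides with the Walsh integral because $v_R$ is adapted. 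Thus $F_R(t)=\delta(v_R/\sigma_R)$.

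The second step is the Malliavin--Stein inequality. For a centered $F=\delta(v)\in\mathbb D^{1,2}$ with unit variance, combining Stein's equation (whose solutions $f$ satisfy $\|f'\|_\infty\le 2$) with the duality $\E[Ff(F)]=\E[f'(F)\langle DF,v\rangle_{\ch}]$ yields
$$d_{TV}(F,Z)\le 2\,\E\big[|1-\langle DF,v\rangle_{\ch}|\big]\le 2\sqrt{{\rm Var}\big(\langle DF,v\rangle_{\ch}\big)},$$
where the last inequality uses $\E[\langle DF,v\rangle_{\ch}]=\E[F^2]=1$. Applied to $F=F_R(t)$ with $v=v_R/\sigma_R$ this gives the central estimate
$$d_{TV}(F_R(t),Z)\le \frac{2}{\sigma_R^2}\sqrt{{\rm Var}\big(\langle DG_R,v_R\rangle_{\ch}\big)}.$$
It therefore suffices to prove two things: (i) the lower bound $\sigma_R^2\ge cR$ for some $c>0$, and (ii) the upper bound ${\rm Var}(\langle DG_R,v_R\rangle_{\ch})\le CR$; together these give $d_{TV}\le C'/\sqrt R$.

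For the ingredients I would first record the standard regularity facts for the SHE: $u(t,x)\in\mathbb D^{1,p}$ for all $p$ with $\sup_x\E[|u(t,x)|^p]<\infty$, and the Malliavin derivative solves, for $s<t$,
$$D_{s,y}u(t,x)=p_{t-s}(x-y)\sigma(u(s,y))+\int_s^t\int_\R p_{t-r}(x-z)\Sigma(r,z)D_{s,y}u(r,z)\,W(dr,dz),$$
with $\Sigma$ adapted and bounded by the Lipschitz constant of $\sigma$, from which a Gaussian-type bound $(\E|D_{s,y}u(t,x)|^p)^{1/p}\le C\,p_{t-s}(x-y)$ follows by a Gronwall/Picard argument. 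For (i), the Walsh isometry gives the exact identity ${\rm Cov}(u(t,x),u(t,x'))=\int_0^t\int_\R p_{t-s}(x-y)p_{t-s}(x'-y)\E[\sigma(u(s,y))^2]\,dy\,ds$; using the semigroup identity $\int_\R p_{t-s}(x-y)p_{t-s}(x'-y)\,dy=p_{2(t-s)}(x-x')$ together with a uniform positivity bound $\E[\sigma(u(s,y))^2]\ge c_0>0$ (immediate for the parabolic Anderson model, where $\E[u(s,y)]=1$) yields $\sigma_R^2=\int_{-R}^R\int_{-R}^R{\rm Cov}\,dx\,dx'\ge cR$, while the same identity with $\E[\sigma(u)^2]$ bounded above gives $\sigma_R^2\le CR$.

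The main obstacle is (ii). Writing $\langle DG_R,v_R\rangle_{\ch}=\int_{-R}^R\langle Du(t,x),v_R\rangle_{\ch}\,dx$ and expanding the variance as a double spatial integral of ${\rm Cov}(\langle Du(t,x),v_R\rangle_{\ch},\langle Du(t,x'),v_R\rangle_{\ch})$, the task is to show that this covariance is uniformly bounded and decays integrably in $|x-x'|$, so that the double integral is of order $R$ rather than $R^2$. This decay is the quantitative form of the localization property: the heat kernel confines $D_{s,y}u(t,x)$ to $y$ near $x$, and the approximate independence of widely separated spatial regions forces the off-diagonal covariances to be small. Concretely, one expands $\langle Du(t,x),v_R\rangle_{\ch}$ via the derivative equation, applies the product and chain rules of Malliavin calculus together with the moment and derivative bounds above, and repeatedly uses the semigroup identity to integrate out the time--space variables. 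The crux is to organize these estimates so that exactly one spatial integration is left ``free'' (producing the factor $R$) while every other integration contributes an $R$-independent constant through the Gaussian decay. Essentially all the analytic work lies here; once ${\rm Var}(\langle DG_R,v_R\rangle_{\ch})\le CR$ is established, combining it with (i) in the central estimate completes the proof.
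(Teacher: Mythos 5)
Your reduction is exactly the paper's: you write $F_R(t)$ as a Walsh/Skorohod divergence $\delta(v_R)$, invoke the Malliavin--Stein bound $d_{TV}(F,Z)\le 2\sqrt{{\rm Var}\,\langle DF,v\rangle_{\HH}}$ (the paper's Proposition \ref{lem: dist}), and reduce the theorem to (i) $\sigma_R^2\ge cR$ and (ii) ${\rm Var}\left(\langle DG_R,v_R\rangle_{\HH}\right)\le CR$. The problem is that the proposal stops where the proof actually starts: step (ii), which you yourself call ``essentially all the analytic work,'' is left as a strategy description with no mechanism supplied. Concretely, substituting the derivative equation \eqref{ecu1} into $\langle DG_R,v_R\rangle_{\HH}$ splits it into a diagonal term $\int_0^t\int_\R\varphi_R(s,y)^2\sigma^2(u(s,y))\,dy\,ds$ plus a stochastic-integral remainder. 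The remainder can indeed be controlled by Burkholder together with the Gaussian bound $\|D_{s,y}u(r,z)\|_p\le C\,p_{r-s}(z-y)$ --- though note this bound itself does not follow from a naive Gronwall argument, since the kernel $p^2_{r-r_1}(z-z_1)=\frac{C}{\sqrt{r-r_1}}\,p_{(r-r_1)/2}(z-z_1)$ is singular in time and convolving in space; the paper needs a genuine Picard-type iteration with Gamma-function factors (Lemma \ref{lemma: iteration}). The diagonal term is where the real difficulty sits: its variance requires a quantitative spatial decay of ${\rm Cov}\left(\sigma^2(u(s,y)),\sigma^2(u(s,y'))\right)$ in $|y-y'|$, and this cannot be extracted from moment bounds or from the localization heuristic you invoke. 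The paper's key device is the Clark--Ocone formula: writing $\sigma^2(u(s,y))-\E[\sigma^2(u(s,y))]=\int_0^s\int_\R \E\left[D_{r,z}\sigma^2(u(s,y))\,\middle|\,\mathcal F_r\right]W(dr,dz)$, the covariance becomes an integral over $(r,z)$ of products of conditional expectations of derivatives, each localized by a heat kernel, yielding ${\rm Cov}\le C\int_0^s p_{2s-2r}(y-y')\,dr$, which is exactly the integrable decay needed to gain the factor $R$ instead of $R^2$. Without this (or an equivalent decorrelation device) your step (ii) is an unproved assertion, not a proof.

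A second, smaller gap is in step (i): you assume a uniform positivity $\E[\sigma(u(s,y))^2]\ge c_0>0$, which is not among the hypotheses --- the theorem assumes only $\sigma_R>0$ for a general Lipschitz $\sigma$, and your parenthetical justification covers only the Anderson case $\sigma(u)=u$. The fix is cheap and is the paper's route (Proposition \ref{pro:covariance}): the exact covariance identity gives $\sigma_R^2=2\int_0^t\xi(r)\int_0^{2R}p_{2t-2r}(z)(2R-z)\,dz\,dr$ with $\xi(r)=\E[\sigma(u(r,y))^2]$, so $\sigma_R>0$ already forces $\int_0^t\xi(r)\,dr>0$ and hence $\sigma_R^2/R\to 2\int_0^t\xi(r)\,dr>0$; no pointwise lower bound on $\xi$ is needed. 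With these two repairs --- the Clark--Ocone covariance argument for the diagonal term and the corrected variance lower bound --- your outline would coincide with the paper's proof.
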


\noindent
{\it Remark.}  Condition  $\sigma(1)\not=0$ guarantees that $\sigma_R >0$. Notice that this condition is not necessary. Taking into account that $\sigma_R=0$ implies
$$
\int_0^t \int_{\R} \E( \sigma^2(u(s,y))) \left(\int_{-R}^R p_{t-s} (x-y) dx \right)^2 dy ds = 0,
$$
a  sufficient condition would be that $\sigma(u(s,y))$ is not identically zero on $[0,t] \times \R$ with positive probability.

\medskip
We will show (see Proposition (\ref{pro:covariance})) that the variance $\sigma_R^2$ satisfies
\[
\lim_{R\rightarrow \infty} \frac{\sigma_R^2}{R} =   2 \int_0^t \xi(s) ds,
\]
where $\xi(s)= \E[  \sigma(u(s,y))^2]$. It turns out that $ \E[  \sigma(u(s,y))^2]$  does not depend on $y\in \R$ and is bounded on compact intervals. Then, we also prove the following functional version of  Theorem \ref{thm:TV-distance} with a normalization by $1/\sqrt{R}$.
 
\begin{theorem}
\label{thm:functional-CLT}
Suppose that $u(t,x)$ is the mild solution to equation (\ref{eq:heat-equation}).
Set $\xi(s)= \E[  \sigma(u(s,y))^2] $, $s\ge 0$.
Then, for any $T>0$,
$$
\left( \frac{1}{\sqrt{R}}\left(\int_{-R}^R u(t,x)dx - 2R\right) \right)_{t\in [0,T]} \rightarrow \left(\int_0^t \sqrt{2\xi(s)}dB_s\right)_{t\in [0,T]},
$$
as $R$ tends to infinity, where $B$ is a Brownian motion and the convergence is in law on the space of continuous functions $C([0,T])$.
\end{theorem}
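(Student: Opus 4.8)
\section*{Proof plan for Theorem \ref{thm:functional-CLT}}

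The plan is to reduce the convergence on $C([0,T])$ to the two classical ingredients: convergence of the finite-dimensional distributions of the rescaled process $X_R(t):=\frac{1}{\sqrt R}\lp\int_{-R}^R u(t,x)\,dx-2R\rp$, together with tightness of the family $\{X_R\}_R$ in $C([0,T])$. The candidate limit $G(t):=\int_0^t\sqrt{2\xi(s)}\,dB_s$ is a centered continuous Gaussian process with covariance $\E[G(t_1)G(t_2)]=2\int_0^{t_1\wedge t_2}\xi(s)\,ds$, and since a centered Gaussian law on $C([0,T])$ is determined by its covariance, it will suffice to show that the finite-dimensional distributions of $X_R$ converge to centered Gaussians with exactly this covariance and then to establish tightness.

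First I would treat the finite-dimensional distributions. Using the mild formulation \eqref{eq: mild} and a stochastic Fubini argument, one writes
\[
X_R(t)=\frac{1}{\sqrt R}\int_0^t\!\int_{\R}\Phi_{R,t}(s,y)\,\sigma(u(s,y))\,W(ds,dy),\qquad \Phi_{R,t}(s,y):=\int_{-R}^R p_{t-s}(x-y)\,dx,
\]
so that each $X_R(t)$ is a Skorokhod integral and hence Malliavin differentiable. Fixing $0\le t_1<\cdots<t_n\le T$ and setting $C_{ij}:=2\int_0^{t_i\wedge t_j}\xi(s)\,ds$, the multivariate Malliavin--Stein method bounds the distance between the law of $(X_R(t_1),\dots,X_R(t_n))$ and the centered Gaussian with covariance $(C_{ij})$ by the quantities $\E\big|C_{ij}-\langle DX_R(t_i),\,-DL^{-1}X_R(t_j)\rangle_{\HH}\big|$, where $\HH=L^2(\R_+\times\R)$. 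The diagonal terms are controlled exactly as in the proof of Theorem \ref{thm:TV-distance}, while the convergence of the covariances $\frac1R\,\E[X_R(t_i)X_R(t_j)]\to C_{ij}$ is the natural extension of Proposition \ref{pro:covariance} to two distinct times, obtained by restricting the second-moment computation to $[0,t_i\wedge t_j]$. I expect this step to require only routine modifications of the univariate analysis already carried out.

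The remaining, and I expect harder, step is tightness, for which I would invoke the Kolmogorov criterion: it is enough to produce an exponent $p$, taken large, and $\kappa>1$ with $\E|X_R(t)-X_R(s)|^{p}\le C\,|t-s|^{\kappa}$ for all $0\le s<t\le T$, uniformly in $R$. Writing the increment as a single Walsh integral with kernel
\[
\psi_R(r,y)=\frac{1}{\sqrt R}\Big(\Phi_{R,t}(r,y)\,\1_{\{r\le t\}}-\Phi_{R,s}(r,y)\,\1_{\{r\le s\}}\Big),
\]
the Burkholder--Davis--Gundy inequality combined with the uniform moment bounds for $u$ reduces the problem to estimating the deterministic quantity $\int_0^t\!\int_{\R}\psi_R(r,y)^2\,dy\,dr$. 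This splits into a contribution over $(s,t]$ with kernel $R^{-1/2}\Phi_{R,t}$, for which $\int_{\R}\Phi_{R,t}(r,y)^2\,dy\le\int_\R\Phi_{R,t}(r,y)\,dy=2R$ gives a bound of order $(t-s)$, and a memory contribution over $[0,s]$ involving $R^{-1/2}(\Phi_{R,t}-\Phi_{R,s})$. The latter is the technical heart of the argument: since $\partial_\tau\!\int_{-R}^R p_\tau(x-y)\,dx$ is a boundary term concentrated near $y=\pm R$, the difference $\Phi_{R,t}-\Phi_{R,s}$ is localized near the two endpoints and should contribute only a lower-order power of $(t-s)$, uniformly in $R$. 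Combining the two estimates and taking $p$ large enough that the resulting exponent $\kappa$ exceeds $1$ then yields tightness. Controlling this memory term — that is, quantifying precisely how the time-regularity of the kernel interacts with the spreading of the window $[-R,R]$ — is the main obstacle I anticipate.
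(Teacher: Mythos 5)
Your proposal has the same skeleton as the paper's proof --- finite-dimensional distributions via a multivariate Malliavin--Stein bound, tightness via the Kolmogorov criterion and Burkholder--Davis--Gundy, and the correct limiting covariance $C_{ij}=2\int_0^{t_i\wedge t_j}\xi(s)\,ds$ --- but it has a genuine gap in each of the two steps. In the finite-dimensional step you invoke the standard Nourdin--Peccati multivariate theorem, whose correction terms are $\langle DX_R(t_i),-DL^{-1}X_R(t_j)\rangle_{\HH}$, and you then assert that these ``are controlled exactly as in the proof of Theorem \ref{thm:TV-distance}''. That assertion conflates $-DL^{-1}X_R(t_j)$ with the Walsh integrand $v_R^{(j)}(s,y)=\1_{[0,t_j]}(s)\,R^{-1/2}\int_{-R}^R p_{t_j-s}(x-y)\sigma(u(s,y))\,dx$. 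These are different $\HH$-valued random variables: both represent $X_R(t_j)$ as a divergence, but $-DL^{-1}X_R(t_j)$ is defined through the chaos expansion of the solution, which is not explicit for a nonlinear equation, and it is not adapted; none of the estimates behind Theorem \ref{thm:TV-distance} --- which bound ${\rm Var}\,\langle DF_R,v_R\rangle_{\HH}$ using the explicit kernel $v_R$ and the derivative equation \eqref{ecu1} --- apply to it. This is precisely why the paper does not use Theorem 6.1.2 of Nourdin--Peccati off the shelf but proves a variant, Proposition \ref{lemma: NP 6.1.2}, in which $-DL^{-1}F^{(j)}$ is replaced by any $v^{(j)}$ with $F^{(j)}=\delta(v^{(j)})$ (a short interpolation/duality argument, but a needed new ingredient, and indeed the paper's stated methodological innovation). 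With $v_R^{(j)}$ in place of $-DL^{-1}X_R(t_j)$ your outline goes through: the mean of $\langle DX_R(t_i),v_R^{(j)}\rangle_{\HH}$ converges to $C_{ij}$ by the two-time extension of Proposition \ref{pro:covariance}, and its fluctuation is controlled by the Theorem \ref{thm:TV-distance} machinery. As written, with $-DL^{-1}$, the step cannot be completed by the tools at hand.

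For tightness, you reduce correctly to a deterministic kernel estimate and handle the contribution of $(s,t]$, but you leave the memory term $\frac1R\int_0^s\int_\R\bigl(\Phi_{R,t}(r,y)-\Phi_{R,s}(r,y)\bigr)^2\,dy\,dr$ unproved, offering only the heuristic that the difference is localized near $y=\pm R$ and explicitly flagging it as the main obstacle; since this is by your own account the technical heart, the plan is incomplete exactly where it needs to be complete. The paper closes it in a few lines by Plancherel: the $y$-integral equals $C\int_\R \frac{\sin^2(R\xi)}{\xi^2}\bigl(e^{-\frac{t-r}{2}\xi^2}-e^{-\frac{s-r}{2}\xi^2}\bigr)^2\,d\xi$, integrating in $r\in[0,s]$ produces the factor $(1-e^{-\frac{t-s}{2}\xi^2})^2\,\frac{1-e^{-s\xi^2}}{\xi^2}$, and the elementary bounds $|1-e^{-a}|\le\sqrt a$ and $1-e^{-s\xi^2}\le 1$ give $\int_\R\frac{\sin^2(R\xi)}{\xi^2}(t-s)\,d\xi=CR(t-s)$, hence $\E|X_R(t)-X_R(s)|^p\le C(t-s)^{p/2}$ uniformly in $R$ (Proposition \ref{pro:tightness}). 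Your boundary-term heuristic points in the right direction, but the uniform-in-$R$ bound still has to be established, and the Fourier route is the efficient way to do it.
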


Theorem  \ref{thm:TV-distance} is proved using a combination of Stein's method for normal approximations and Malliavin calculus, following the ideas introduced by Nourdin and Peccati in \cite{NP}. An innovative aspect of our methodology is to use the representation of $F_R(t)$ as a divergence, taking into account that the It\^o-Walsh integral is a particular case of the Skorohod integral.

The rest of the paper is organized as follows. In Section \ref{sec:prel} we recall some preliminaries on   Malliavin calculus and Stein's method. Sections \ref{sec:thm1} and \ref{sec:thm2} are devoted to the proofs of our main theorems. We put one technical lemma into the appendix.

\section{Preliminaries}
\label{sec:prel}

Let us first introduce the white noise on $\R_+\times\R$. 
We denote by $\mathcal{B}_b(\R_+\times \R)$ the collection of   Borel sets  $A\subset  \R_+\times\R$  with finite Lebesgue measure, denoted by $|A|$. 
Consider a centered Gaussian family of random variables
  $W= \{W(A), A\in \mathcal{B}_b\}$,  defined  in a  complete probability space $(\Omega, \mathcal{F}, P)$,
with covariance
\[
\E\left[W(A)W(B)\right] = |A\cap B|.
\]
For any $t\ge 0$, we denote  by  ${\mathcal F}_t$ the $\sigma$-field generated by  the random variables
$\{ W([0,s] \times A):0\le s\le
t,A\in {\mathcal B}_b (\R) \}$. 
  As proved in \cite{Walsh},
for any adapted random field $\left\{X(s,y),\; (s,y)\in\R_+\times\R\right\}$
that is jointly measurable and
\begin{equation}  \label{inte}
\int_0^\infty \int_\R \E[X(s,y)^2] d y d s <\infty,
\end{equation}
the following stochastic integral
\[
\int_0^\infty \int_{\R} X(s,y) W(d s, d y)
\]
is well-defined.

The proof of the main theorems relies  on  Malliavin calculus and Stein's method. Next we will   introduce the basic elements of these methodologies.  

 \subsection{Malliavin calculus} 

In this subsection we recall some basic facts on the  Malliavin calculus associated with $W$. We refer to \cite{Nualart} for a
detailed account on the Malliavin calculus with respect to a Gaussian process.
Consider the Hilbert space $\HH= L^2(\R_+\times \RR)$. The Wiener integral
\[
W(h)= \int_0^\infty \int_{\R} h(t,x) W(dt,dx)
\]
provides an isometry between  the Hilbert space $\HH$ and $L^2(\Omega)$.  In this sense $\{W(h), h\in \HH\}$ is an isonormal Gaussian process.

Denote by $C_p^{\infty}(\RR^n)$ the space of smooth functions with all their partial derivatives having at most polynomial growth at infinity. Let $\mathcal{S}$ be the space of simple random variables of the form 
\begin{equation*}
F = f(W(h_1), \dots, W(h_n))
\end{equation*}
for $f\in C_p^{\infty}(\RR^n)$ and $h_i \in \HH$, $1\leq i \leq n$. Then $DF$ is the $\HH$-valued random variable defined by
\begin{equation}
DF=\sum_{i=1}^n  \frac {\partial f} {\partial x_i} (W(h_1), \dots, W(h_n)) h_i\,.
\end{equation}
 The derivative operator $D$  is a closable operator from $L^p(\Omega)$ into $L^p(\Omega;  \HH)$ for any $p \geq1$. For any $p \ge 1$, let $\mathbb{D}^{1,p}$ be the completion of $\mathcal{S}$ with respect to the norm
\begin{equation*}
\|F\|_{1,p} = \left(\E |F|^p +   \E(  \|D F\|^p_\HH    \right)^{1/p}\,.
\end{equation*}
We denote by $\delta$ the adjoint of the derivative operator given by the duality formula
\begin{equation}\label{eq: duality formula}
\E (\delta(u) F) = \E( \langle u, DF \rangle_\HH)
\end{equation}
for any $F \in \mathbb{D}^{1,2}$, and any $u\in L^2(\Omega; \HH)$ in the domain of $\delta$, denoted by ${\rm Dom} \, \delta$. The operator $\delta$ is
also called the Skorohod integral because in the case of the Brownian motion, it coincides
with an extension of the It\^o integral introduced by Skorohod (see \cite{GT, NuPa}). 
More generally, in the context of the space-time white noise $W$, any   adapted random field $X$ which is jointly measurable and satisfies (\ref{inte}) belongs to the domain of $\delta$ and $\delta (X)$ coincides with the Walsh integral:
\[
\delta (X) = 
\int_0^\infty \int_{\R} X(s,y) W(d s, d y).
\]
As a consequence,  the mild   equation   \eqref{eq: mild} can  also be written as 
\begin{equation}\label{eq: mild Skorohod}
u(t,x) = 1 + \delta\left(p_{t-\cdot}(x-*)u(\cdot, *)\right).
\end{equation}

It is known   that for any  $(t,x)$ the solution $u(t,x)$ of equation  (\ref{eq:heat-equation}) belongs to $\mathbb{D}^{1,p}$ for any $p\ge 2$ and the derivative satisfies the following linear  stochastic integral differential equation for $t\ge s$,
\begin{eqnarray}  \notag
D_{s,y}u(t,x) &=& p_{t-s} (x-y)  \sigma(u(s,y)) \\  \label{ecu1}
&& + \int_s^t \int_{\R}     p_{t-r}(x-z)  \Sigma(r,z) D_{s,y} u(r,z) W(dr,dz),
\end{eqnarray}
where $\Sigma(r,z)$ is an adapted process, bounded by the Lipschitz constant of $\sigma$. If $\sigma $ is continuously differentiable, then
$\Sigma(r,z)= \sigma'(u(r,z))$. This result is proved in  Proposition 2.4.4  of  \cite{Nualart}  in the case of Dirichlet boundary conditions on $[0,1]$ and the proof can be easily extended to equations on $\R$. We also refer to   \cite{CHN,NQ}  for additional references where this result is used  when $\sigma$ is continuously differentiable.

\subsection{Stein's method}
Stein's method is a probabilistic technique which allows one to measure the distance between a probability distribution and normal distribution.  The total variance distance between two  random variables $F$ and $G$ is defined by 
\begin{equation}
d_{TV}(F,G) := \sup_{B \in \mathcal{B}(\RR)} |P(F \in B) - P(G \in B)|\,,
\end{equation}
where $\mathcal{B}(\RR)$ is the collection of all Borel sets in $\RR$.
We point out that  $d_{TV}(F,G)$ only depends on the laws of $F$ and $G$ and it defines a metric on the set of probability measures on $\R$. 

The following theorem provides an upper bound for the total variation distance between any  random variable and a random variable with standard normal distribution. 
\begin{theorem}\label{thm:Stein}
For $Z \sim \mathcal{N}(0,1)$ and for any  random variable $F$, 
\begin{equation}
d_{TV}(F, Z) \leq \sup_{f \in \mathscr{F}_{TV} } |\E[ f'(F)] - \E [F f(F)]| \,,
\end{equation}
where   $\mathscr{F}_{TV}$ is the class of continuously differentiable functions $f$ such that
 $\|f\|_{\infty} \leq \sqrt{\pi/2} $ and $ \|f'\|_{\infty} \leq 2 $.
\end{theorem}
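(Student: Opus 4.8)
The plan is to run the classical Stein's method argument---solve Stein's equation for a test function, bound the solution, and then feed $F$ into the equation---while attending to the one structural subtlety of the statement: the members of $\mathscr{F}_{TV}$ are required to be \emph{continuously differentiable}, whereas the Stein solution attached to an indicator $\mathbf{1}_B$ has a derivative that jumps across the boundary of $B$. I would sidestep this by first replacing indicators with continuous test functions, which is exactly the point where the $C^1$ requirement must be reconciled with the definition of $d_{TV}$.

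First I would record a measure-theoretic reformulation of the total variation distance. Writing $\mu_F,\mu_Z$ for the laws of $F$ and $Z$ and using the Hahn--Jordan decomposition, for any Borel $h$ with $0\le h\le 1$ one has $\int h\,d(\mu_F-\mu_Z)\le (\mu_F-\mu_Z)^+(\R)=\sup_B\big(\mu_F(B)-\mu_Z(B)\big)=d_{TV}(F,Z)$, with equality attained at $h=\mathbf{1}_B$. Then, by outer regularity of the finite Borel measures $\mu_F+\mu_Z$ together with Urysohn's lemma, every $\mathbf{1}_B$ can be approximated in $L^1(\mu_F+\mu_Z)$ by continuous $h$ with $0\le h\le 1$, which upgrades the supremum to continuous functions:
\[
d_{TV}(F,Z)=\sup\Big\{\,\big|\E[h(F)]-\E[h(Z)]\big|\ :\ h\in C_b(\R),\ 0\le h\le 1\,\Big\}.
\]

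Next, for a fixed continuous $h$ with $0\le h\le 1$, set $N(h)=\E[h(Z)]$ and $\tilde h=h-N(h)$, and consider Stein's equation $f'(x)-xf(x)=\tilde h(x)$, whose bounded solution is
\[
f_h(x)=e^{x^2/2}\int_{-\infty}^x \tilde h(y)\,e^{-y^2/2}\,dy=-e^{x^2/2}\int_x^\infty \tilde h(y)\,e^{-y^2/2}\,dy.
\]
Because $h$ is continuous, $f_h'(x)=xf_h(x)+\tilde h(x)$ is continuous, so $f_h$ is genuinely $C^1$. I would then verify the two classical bounds, which is the only computational step: since $h,N(h)\in[0,1]$ we have $\|\tilde h\|_\infty\le 1$; the Mills-ratio estimate $\int_x^\infty e^{-y^2/2}\,dy\le x^{-1}e^{-x^2/2}$ for $x>0$ gives both $e^{x^2/2}\int_x^\infty e^{-y^2/2}\,dy\le\sqrt{\pi/2}$ and $x\,e^{x^2/2}\int_x^\infty e^{-y^2/2}\,dy\le 1$ on $[0,\infty)$, with the mirror statements on $(-\infty,0]$ from the other representation. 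These yield $\|f_h\|_\infty\le\sqrt{\pi/2}\,\|\tilde h\|_\infty\le\sqrt{\pi/2}$ and $|f_h'(x)|\le |xf_h(x)|+|\tilde h(x)|\le 2\|\tilde h\|_\infty\le 2$, so $f_h\in\mathscr{F}_{TV}$.

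Finally, substituting $F$ into Stein's equation and taking expectations gives $\E[h(F)]-N(h)=\E[f_h'(F)]-\E[Ff_h(F)]$ (legitimate since $\E|F|<\infty$ in our application; if $\E|F|=\infty$ the right-hand side of the theorem is infinite and the bound is trivial). As $N(h)=\E[h(Z)]$ and $f_h\in\mathscr{F}_{TV}$, the right-hand side is at most $\sup_{f\in\mathscr{F}_{TV}}\big|\E[f'(F)]-\E[Ff(F)]\big|$; taking the supremum over all continuous $h$ with $0\le h\le 1$ and invoking the reformulation of the first step yields the claim. The substantive content is thus concentrated in the pair of Stein-solution bounds $\|f_h\|_\infty\le\sqrt{\pi/2}$ and $\|f_h'\|_\infty\le 2$ (where the precise constants defining $\mathscr{F}_{TV}$ originate), while the reduction to continuous test functions that guarantees $f_h\in C^1$ is soft but essential; everything else is routine.
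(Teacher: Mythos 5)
The paper does not actually prove Theorem \ref{thm:Stein}: it states it and refers to \cite{NP} for the proof, so there is no internal argument to compare against. Your proof is correct and is essentially the classical Stein's-method argument behind the cited result: solve $f'(x)-xf(x)=h(x)-\E[h(Z)]$, verify $\|f_h\|_\infty\le\sqrt{\pi/2}$ and $\|f_h'\|_\infty\le 2$ via the Mills-ratio estimate (both computations check out, and the two representations of $f_h$ agree because $\int_{\R}\tilde h(y)e^{-y^2/2}\,dy=0$), then plug in $F$. The one place where you add substance beyond a bare citation is the reduction of $d_{TV}$ to continuous test functions $h\in[0,1]$ via Hahn--Jordan, inner/outer regularity and Urysohn; this is exactly what reconciles the $C^1$ requirement in the definition of $\mathscr{F}_{TV}$ with the indicator functions defining total variation (for Borel $h$ the Stein solution is only Lipschitz, with $f_h'$ jumping where $h$ does), and it is a legitimate alternative to the smoothing of indicators used in \cite{NP}. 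One small patch: in the degenerate case $\E|F|=\infty$ you assert the right-hand side is infinite, but since $\E[Ff(F)]$ could a priori be an ill-defined $\infty-\infty$ for a given $f$, you should exhibit a concrete member of the class, e.g. $f(x)=\sqrt{\pi/2}\,\tanh x$, for which $Ff(F)\ge 0$ and $\E[Ff(F)]=+\infty$ is well-defined, making the bound trivially true; this is a one-line fix and does not affect the argument, which in the paper's application is used only for $F=F_R(t)$ with $\E[F_R(t)^2]=1$.
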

 See \cite{NP} for a proof of this theorem.
Theorem  \ref{thm:Stein} can be combined with Malliavin calculus to get  the following estimate.

\begin{proposition}\label{lem: dist}
Let $F=\delta (v)$ for some $\HH$-valued random variable $v$ which belongs to ${\rm Dom }\, \delta$. Assume $\E [F^2] = 1$
and $F \in \mathbb{D}^{1,2}$. Let $Z \sim \mathcal{N}(0,1)$.  Then we have 
\begin{equation}
d_{TV}(F, Z) \leq 2 \sqrt{{\rm Var}\langle DF, v\rangle_{\HH}}\,.
\end{equation}
\end{proposition}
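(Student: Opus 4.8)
The plan is to combine the Stein bound of Theorem \ref{thm:Stein} with the duality formula \eqref{eq: duality formula} and the chain rule for the Malliavin derivative, following the Nourdin–Peccati approach. By Theorem \ref{thm:Stein}, it suffices to bound $|\E[f'(F)] - \E[Ff(F)]|$ uniformly over the class $\mathscr{F}_{TV}$ of continuously differentiable $f$ with $\|f\|_\infty \le \sqrt{\pi/2}$ and $\|f'\|_\infty \le 2$. The idea is to rewrite the awkward term $\E[Ff(F)]$ using the fact that $F=\delta(v)$, converting the multiplication by $F$ into a Malliavin derivative that lands on $f'(F)$.

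First I would use that $f(F)\in\mathbb{D}^{1,2}$ (since $f$ is Lipschitz with bounded derivative and $F\in\mathbb{D}^{1,2}$) together with the duality formula \eqref{eq: duality formula} applied to $u=v$ to obtain
\[
\E[Ff(F)] = \E[\delta(v)\,f(F)] = \E\big[\langle v, D(f(F))\rangle_\HH\big].
\]
Invoking the chain rule $D(f(F)) = f'(F)\,DF$ then gives $\E[Ff(F)] = \E[f'(F)\,\langle DF, v\rangle_\HH]$. Next I would identify the mean of the inner product: taking $F$ itself in the duality formula,
\[
\E[\langle DF, v\rangle_\HH] = \E[\delta(v)\,F] = \E[F^2] = 1,
\]
using the hypothesis $\E[F^2]=1$. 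Consequently the Stein quantity collapses to
\[
\E[f'(F)] - \E[Ff(F)] = \E\big[f'(F)\,(1 - \langle DF, v\rangle_\HH)\big],
\]
where the random variable $1-\langle DF, v\rangle_\HH$ has zero mean.

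Finally, using $\|f'\|_\infty\le 2$ and the Cauchy–Schwarz inequality,
\[
\big|\E[f'(F)(1-\langle DF, v\rangle_\HH)]\big| \le 2\,\E\big|1-\langle DF, v\rangle_\HH\big| \le 2\sqrt{\E\big[(1-\langle DF, v\rangle_\HH)^2\big]} = 2\sqrt{{\rm Var}\langle DF, v\rangle_\HH},
\]
the last equality following because $\E[\langle DF, v\rangle_\HH]=1$ makes the second moment equal the variance. Taking the supremum over $f\in\mathscr{F}_{TV}$ yields the stated bound.

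The main obstacle is the rigorous justification of the chain rule and of the duality formula when $f$ is merely continuously differentiable with bounded derivative rather than lying in $C_p^\infty$. I would handle this by a standard regularization: approximate $f$ by convolution with a mollifier to get $f_n\in C^\infty$ with $f_n\to f$ and $f_n'\to f'$ pointwise and uniformly bounded by $2$, apply the chain rule and duality to each $f_n$ (where they hold for smooth functions), and pass to the limit via dominated convergence, using $F, f(F)\in\mathbb{D}^{1,2}$ and the uniform bound $\|f_n'\|_\infty\le 2$. Everything else reduces to the elementary identities above.
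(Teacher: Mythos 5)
Your proof is correct and follows essentially the same argument as the paper: apply Stein's bound, use duality and the chain rule to rewrite $\E[Ff(F)]$ as $\E[f'(F)\langle DF,v\rangle_\HH]$, note $\E\langle DF,v\rangle_\HH = \E[F^2]=1$, and finish with Cauchy--Schwarz. Your added remark on justifying the chain rule for merely $C^1$ functions via mollification is a sound technical point that the paper leaves implicit.
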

\begin{proof}
By our assumption on $F$, we have 
\begin{align*}
\E [F f(F)] &= \E [\delta(v) f(F)] = \E \langle v, D[f(F)]\rangle_{\HH} \\
& = \E \langle v, f'(F)DF\rangle_{\HH)} = \E \left(f'(F) \langle v, DF\rangle_{\HH}\right)\,.
\end{align*}
Thus, by Theorem \ref{thm:Stein}, 
\begin{align*}
d_{TV}(F, Z) \leq & \sup_ { f\in  \mathscr{F}_{TV}} \left|\E [f'(F)- F f(F)]\right|\\
= & \sup_ { f\in \mathscr{F}_{TV}} \left |\E [f'(F)(1-\langle DF, v \rangle_{\HH})]\right|\\
\leq &2 \E( |1-\langle DF, v \rangle_{\HH}|)\\
\leq & 2 \sqrt{\text{Var}\langle DF, v\rangle_{\HH}}\,,
\end{align*}
where the last step follows from Cauchy-Schwarz inequality, \eqref{eq: duality formula} and 
\begin{align*}
\E( \langle DF, v\rangle_{\HH} )= \E [F \delta(v)] = \E (F^2) = 1\,.
\end{align*} 
\end{proof}

In proving Theorem \ref{thm:functional-CLT} we also need the following  proposition, which is a generalization of Theorem 6.1.2 in \cite{NP}. 

\begin{proposition}\label{lemma: NP 6.1.2}
Let $F=( F^{(1)}, \dots, F^{(m)})$ be a random vector such that $F^{(i)} = \delta (v^{(i)})$ for $v^{(i)} \in {\rm Dom}\, \delta$, $i = 1,\dots, m$.  Assume $F^{(i)} \in \mathbb{D}^{1,2}$ for $i=1,\dots,m$. Let $Z$ be an $m$-dimensional Gaussian centered vector with covariance matrix $(C_{i,j}) _{1\le i,j \le m} $. For any  $C^2$ function $h: \R^m \rightarrow \R$ with bounded second partial derivatives, we have
\[
| \E h(F_R) -\E h(Z) | \le \frac 12 \|h ''\|_\infty \sqrt{   \sum_{i,j=1}^m   \E \left[ (C_{i,j} - \langle DF^{(i)}, v^{(j)} \rangle_{\HH})^2
\right] },
\]
where 
\[
\|h'' \| _\infty= \max_{1\le i,j \le m} \sup_{x\in   \R^m}  \left| \frac { \partial ^2h } {\partial x_i \partial x_j} (x) \right|.
\]
\end{proposition}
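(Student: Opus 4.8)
The plan is to use the Gaussian interpolation (\emph{smart path}) method that underlies the proof of the univariate bound in Proposition \ref{lem: dist}. Enlarge the probability space so that the centered Gaussian vector $Z\sim N(0,C)$ is independent of $W$; in particular $DZ^{(j)}=0$ for every $j$. For $t\in[0,1]$ set $g_t=\sqrt{t}\,F+\sqrt{1-t}\,Z$ and
\[
\Psi(t) = \E\left[ h\left(\sqrt{t}\, F + \sqrt{1-t}\, Z\right)\right].
\]
Since $h$ has bounded second derivatives it has at most quadratic growth and $F^{(i)}\in\mathbb{D}^{1,2}\subset L^2(\Omega)$, so $\Psi$ is well defined, with $\Psi(1)=\E[h(F)]$ and $\Psi(0)=\E[h(Z)]$. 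Thus the quantity to be estimated equals $\int_0^1\Psi'(t)\,dt$, where a formal differentiation gives
\[
\Psi'(t) = \sum_{i=1}^m \E\left[ \partial_i h(g_t) \left( \frac{1}{2\sqrt{t}} F^{(i)} - \frac{1}{2\sqrt{1-t}} Z^{(i)} \right) \right].
\]
The heart of the argument is to rewrite the two families of terms by integration by parts so that the singular prefactors $1/\sqrt{t}$ and $1/\sqrt{1-t}$ are absorbed.

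For the terms carrying $F^{(i)}=\delta(v^{(i)})$ I would invoke the duality formula \eqref{eq: duality formula} with test variable $\partial_i h(g_t)$. Because $h''$ is bounded, $\partial_i h$ is Lipschitz with bounded derivative, so $\partial_i h(g_t)\in\mathbb{D}^{1,2}$ and the chain rule yields $D[\partial_i h(g_t)] = \sqrt{t}\sum_{j=1}^m \partial_{ij} h(g_t)\, DF^{(j)}$ (only the $F$-part is differentiated, since $DZ^{(j)}=0$). Hence
\[
\frac{1}{2\sqrt{t}}\,\E\left[ \partial_i h(g_t)\, \delta(v^{(i)}) \right] = \frac{1}{2}\sum_{j=1}^m \E\left[ \partial_{ij} h(g_t)\, \langle DF^{(j)}, v^{(i)}\rangle_\HH \right].
\]
For the terms carrying $Z^{(i)}$ I would condition on $W$ and use the Gaussian integration-by-parts identity for $N(0,C)$, namely $\E[Z^{(i)}\varphi(Z)]=\sum_j C_{ij}\E[\partial_j\varphi(Z)]$ applied to $\varphi(z)=\partial_i h(\sqrt{t}F+\sqrt{1-t}z)$, whose $Z$-gradient produces a factor $\sqrt{1-t}$:
\[
\frac{1}{2\sqrt{1-t}}\,\E\left[ \partial_i h(g_t)\, Z^{(i)} \right] = \frac{1}{2}\sum_{j=1}^m C_{ij}\,\E\left[ \partial_{ij} h(g_t) \right].
\]
In both computations the singular factor cancels exactly, which is precisely what makes the differentiation under the expectation legitimate and $\Psi$ of class $C^1$ on all of $[0,1]$.

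Subtracting the two families, summing over $i$, and relabelling indices (using the symmetry of $\partial_{ij}h$ and of $C$) gives
\[
\Psi'(t) = \frac{1}{2}\sum_{i,j=1}^m \E\left[ \partial_{ij} h(g_t)\left( \langle DF^{(i)}, v^{(j)}\rangle_\HH - C_{ij}\right)\right].
\]
Integrating over $t\in[0,1]$, taking absolute values inside, bounding $|\partial_{ij}h|\le\|h''\|_\infty$, and applying the Cauchy--Schwarz inequality (first in $\Omega$ to replace $\E|\cdot|$ by $\sqrt{\E|\cdot|^2}$, then over the indices $i,j$) yields a bound of the announced form.

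The step I expect to be the main obstacle is the rigorous justification of the two integration-by-parts operations under the mere $C^2$-regularity of $h$, rather than the smoothness assumed in the definition of the class $\mathcal S$. The clean route is to first establish both the chain rule for $D$ and the duality \eqref{eq: duality formula} for functions $h$ with bounded and continuous derivatives of every order, obtained from the given $h$ by mollification, and then to pass to the limit using that $\partial_i h$ and $\partial_{ij}h$ converge boundedly while $g_t\to g_t$ in $L^2$ and the $v^{(i)}$ lie in ${\rm Dom}\,\delta$. One must also verify, as noted above, that the prefactors $1/\sqrt{t}$ and $1/\sqrt{1-t}$ genuinely cancel after the respective integrations by parts, so that $\Psi'$ is integrable on $[0,1]$ and differentiation under the expectation is licit; the Gaussian identity for the $Z$-terms is standard once the independence of $Z$ and $W$ is used through the conditioning.
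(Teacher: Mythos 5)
Your proof is correct and follows essentially the same route as the paper: the paper defines the interpolation $\Phi(t)=\E\left[h\left(\sqrt{1-t}\,F+\sqrt{t}\,Z\right)\right]$ (your $\Psi$ is just the time-reversal $t\mapsto 1-t$ of this), treats the $F^{(i)}=\delta(v^{(i)})$ terms via the duality formula after conditioning on $Z$ (your ``$DZ^{(j)}=0$ on the enlarged space'' device), treats the $Z^{(i)}$ terms by Gaussian integration by parts (which the paper imports from the proof of Theorem 6.1.2 in Nourdin--Peccati rather than rederiving), and concludes with Cauchy--Schwarz exactly as you do. The technical points you flag (mollification to justify the integration by parts for merely $C^2$ test functions, integrability of the derivative near the endpoints) are passed over silently in the paper's proof as well, so there is no substantive difference.
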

\begin{proof}
The proof will follow the same ideas as those in the proof of Theorem 6.1.2 in \cite{NP}. Without loss of generality, we may assume that $Z$ and $F$ are independent. Let 
\begin{equation*}
\Phi(t) = \E \left[h\left(\sqrt{1-t} F + \sqrt{t}Z\right) \right]\,.
\end{equation*}
Then
\begin{equation*}
\E [h(Z)]- \E [h(F) ] = \Phi(1) - \Phi(0) = \int_0^1 \Phi'(t)dt\,,
\end{equation*}
with 
\begin{equation}\label{eq: lemma 6.1.2 1}
\Phi'(t) = \sum_{i=1}^m \E \left(\frac{\partial h}{\partial x_i} \left( \sqrt{1-t} F + \sqrt{t} Z \right) \left[ \frac{1}{2\sqrt{t}} Z^{(i)} - \frac{1}{2} \frac{1}{\sqrt{1-t}} F^{(i)}\right] \right)\,.
\end{equation}
The above expression is a sum of two expectations. For the first expectation, the proof of Theorem 6.1.2 in \cite{NP} already yields that 
\begin{equation}\label{eq: lemma 6.1.2 2}
\E \left( \frac{\partial h}{\partial x_i} \left( \sqrt{1-t} F + \sqrt{t} Z \right) Z^{(i)}\right) = \sqrt{t} \sum_{j=1}^m C_{i,j} \E \left( \frac{\partial^2 h}{\partial x_i \partial x_j} \left( \sqrt{1-t} F + \sqrt{t} Z \right)\right)\,.
\end{equation}
For the second expectation, let $\E_F$ be the expectation conditioned on $Z$, then we have
\begin{align*}
&\E \left(\frac{\partial h}{\partial x_i} \left(\sqrt{1-t}F + \sqrt{t} Z \right) F^{(i)}\right)\\
=& \E \E_F \left(\frac{\partial h}{\partial x_i}\left( \sqrt{1-t} F + \sqrt{t}Z \right) \delta (v^{(i)})\right) \\
=& \E \E _F  \left(\left\langle D \frac{\partial h}{\partial x_i}\left( \sqrt{1-t} F + \sqrt{t}Z \right) ,  v^{(i)} \right \rangle_{\HH}  \right)\\
= & \sqrt{1-t}  \sum_{j=1}^m\E \left( \frac{\partial^2 h}{\partial x_i \partial x_j} \left( \sqrt{1-t} F + \sqrt{t} Z \right) \langle D F^{(j)}, v^{(i)}\rangle_{\HH}\right)\,.
\end{align*}
Finally, combining the above calculation with \eqref{eq: lemma 6.1.2 1} and \eqref{eq: lemma 6.1.2 2} with an application of Cauchy-Schwarz inequality completes the proof. 
\end{proof}

\section{Proof of Theorem \ref{thm:TV-distance}}
\label{sec:thm1}
We begin by computing the asymptotic covariance of $F_R(t)$ as $R$ tends to infinity.  This will be also relevant in the proof of Theorem \ref{thm:functional-CLT}.
\begin{proposition}
\label{pro:covariance}
Denote $\xi(r) = \E [\sigma(u(r,x))^2]$
and set
$$
G_R(t) = \int_{-R}^Ru(t,x)dx - 2R.
$$
Then, for any $s,t \ge 0$, 
$$
\lim_{R\to\infty}\frac{1}{R}{ \rm Cov}(G_R(t),G_R(s)) = 2\int_0^{s\wedge t } \xi(r)dr.
$$
\end{proposition}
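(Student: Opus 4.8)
The plan is to reduce the covariance to a deterministic integral by combining a stochastic Fubini argument with the isometry of the Walsh integral, and then to evaluate the resulting spatial integral explicitly using the semigroup property of the heat kernel.

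First I would rewrite $G_R(t)$ as a single stochastic integral. Since $\E[u(t,x)]=1$ by \eqref{eq: mild}, we have $G_R(t)=\int_{-R}^R(u(t,x)-1)\,dx$, so in particular $\E[G_R(t)]=0$ and $\mathrm{Cov}(G_R(t),G_R(s))=\E[G_R(t)G_R(s)]$. Inserting \eqref{eq: mild} and applying the stochastic Fubini theorem to exchange the $dx$-integral with the Walsh integral gives
\[
G_R(t)=\int_0^t\int_\R \phi_{R,t}(r,y)\,\sigma(u(r,y))\,W(dr,dy),\qquad \phi_{R,t}(r,y):=\int_{-R}^R p_{t-r}(x-y)\,dx .
\]
The integrand is adapted and square integrable, so this is a genuine Walsh integral; justifying the Fubini exchange (via the second-moment bounds on $u$ together with $0\le \phi_{R,t}\le 1$) is the first routine point.

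Because both integrands are adapted, the isometry for the product of two Walsh integrals then yields the exact identity
\[
\mathrm{Cov}(G_R(t),G_R(s))=\int_0^{s\wedge t}\int_\R \phi_{R,t}(r,y)\,\phi_{R,s}(r,y)\,\E[\sigma(u(r,y))^2]\,dy\,dr .
\]
At this point I would invoke the spatial stationarity of the solution (a consequence of the constant initial condition $u_0\equiv 1$ and the translation invariance of $W$), already noted in the text, so that $\E[\sigma(u(r,y))^2]=\xi(r)$ is independent of $y$ and may be pulled out of the $dy$-integral. This factorization is what makes the clean limit possible and is the conceptual crux of the argument.

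It then remains to analyze $\tfrac1R\int_\R \phi_{R,t}(r,y)\phi_{R,s}(r,y)\,dy$. Writing each factor as a double integral of heat kernels and applying the Chapman--Kolmogorov identity $\int_\R p_{t-r}(x-y)p_{s-r}(x'-y)\,dy=p_{t+s-2r}(x-x')$, one reduces the spatial integral to
\[
\int_\R \phi_{R,t}(r,y)\phi_{R,s}(r,y)\,dy=\int_{-R}^R\!\int_{-R}^R p_{t+s-2r}(x-x')\,dx\,dx'=\int_{-2R}^{2R}(2R-|u|)\,p_{t+s-2r}(u)\,du ,
\]
using the triangular change of variables $u=x-x'$. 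Dividing by $R$ gives $\int_{-2R}^{2R}(2-|u|/R)\,p_{t+s-2r}(u)\,du$, which is bounded above by $2$ and converges to $2\int_\R p_{t+s-2r}(u)\,du=2$ as $R\to\infty$, since the correction term $R^{-1}\int_{-2R}^{2R}|u|\,p_{t+s-2r}(u)\,du\le R^{-1}\int_\R|u|\,p_{t+s-2r}(u)\,du\to0$. Finally, since $\xi$ is bounded on $[0,t]$ and the inner factor is uniformly bounded by $2$, the dominated convergence theorem lets me pass the limit through the $dr$-integral, giving $\lim_{R\to\infty}\tfrac1R\,\mathrm{Cov}(G_R(t),G_R(s))=\int_0^{s\wedge t}2\,\xi(r)\,dr$. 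The only genuine obstacles are the justification of the stochastic Fubini step and the use of the $y$-independence of $\E[\sigma(u(r,y))^2]$; once these are in hand, the remainder is an elementary heat-kernel computation.
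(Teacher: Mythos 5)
Your proposal is correct and follows essentially the same route as the paper: both arguments rest on the Walsh/It\^o isometry, the $y$-independence of $\E[\sigma(u(r,y))^2]$, the semigroup property of the heat kernel, and the triangular substitution $\int_{-R}^R\int_{-R}^R p_\tau(x-x')\,dx\,dx' = \int_{-2R}^{2R}(2R-|z|)p_\tau(z)\,dz$ followed by a limit. The only cosmetic difference is the order of operations: you apply stochastic Fubini to write $G_R$ as a single Walsh integral and then invoke the isometry once, whereas the paper first computes the two-point function $\E[u(t,x)u(s,x')]$ pointwise via the isometry and then integrates over $x,x'\in[-R,R]$.
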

\begin{proof}
Thanks to the It\^o isometry we have 
\begin{equation*}
\begin{split}
\E [u(t,x)u(s,x') ]&= 1 + \int_0^{s\wedge t }\int_\R  p_{t-r}(x-y)p_{s-r}(x'-y)\E [\sigma(u(r,y))^2]   dy dr\\
&= 1 + \int_0^{s\wedge t } \int_\R \xi(r) p_{t-r}(x'-y)p_{s-r}(x-y)dy dr  \\
&= 1 + \int_0^{s\wedge t } \xi(r) p_{t+s-2r}(x-x')dr,
\end{split}
\end{equation*}
where in the last line we have used  the semigroup property
\begin{equation}
\label{eq:heat_combination}
\int_\R p_{t}(x'-y)p_{s}(y-x)dy = p_{t+s}(x'-x).
\end{equation}
Since
$$
\E  \left(\int_{-R}^R u(t,x)dx \right) = 2R,
$$
we obtain
\begin{equation*}
\begin{split}
{\rm Cov}(G_R(t),G_R(s)) &=\int_{-R}^R \int_{-R}^R \int_0^{s\wedge t } \xi(r) p_{t+s-2r}(x-x')drdx dx' \\
&= 2\int_0^{s\wedge t } \xi(r)\int_0^{2R} p_{t+s-2r}(z)(2R-z)dz dr.
\end{split}
\end{equation*}
As a consequence,
\begin{equation*}
\begin{split}
\lim_{R\rightarrow \infty} \frac 1 R {\rm Cov}(G_R(t),G_R(s)) & =
\lim_{R\rightarrow \infty} 2 \int_0^{s\wedge t } \xi(r)\int_0^{2R} p_{t+s-2r}(z)(2-\frac zR)dz dr \\
&= 2 \int_0^{s\wedge t }  \xi(r) dr.
\end{split}
\end{equation*}
This concludes the proof. 
\end{proof}




We are now ready to prove Theorem \ref{thm:TV-distance}.
\begin{proof}[Proof of Theorem \ref{thm:TV-distance}]
By Proposition \ref{lem: dist}, we know that for any 
$F \in \mathbb{D}^{1,2} $ such that $\E (F^2)=1$ and $F=\delta(v)$,  
$$
d_{TV}(F,Z)\leq 2\sqrt{{\rm Var}(\langle DF,v\rangle_{\HH} )},
$$
where $v$ is such that $F=\delta(v)$.  
Recall that in our case we have, applying Fubini's theorem, 
\begin{equation*}
\begin{split}
F_R(t) &= \frac{1}{\sigma_R}\left(\int_{-R}^R u(t,x)dx - 2R\right) \\
&= \frac{1}{\sigma_R}\left(\int_{-R}^R \int_0^t \int_\R p_{t-s}(x-y)\sigma(u(s,y))W(ds,dy)dx\right) \\
&=\int_0^t \int_\R  \left(\frac{1}{\sigma_R}\int_{-R}^R p_{t-s}(x-y)\sigma(u(s,y))dx \right) W(ds,dy).
\end{split}
\end{equation*}
As a consequence,  taking into account equation   \eqref{eq: mild Skorohod}, we have, for any fixed $t\ge 0$, $F_R(t) =\delta(v_R)$, where
$$
v_R(s,y) =\mathbf{1} _{[0,t]}(s) \frac{1}{\sigma_R}\int_{-R}^R p_{t-s}(x-y)\sigma(u(s,y))dx.
$$
Moreover,
$$
D_{s,y}F_R =\mathbf{1} _{[0,t]}(s) \frac{1}{\sigma_R}\int_{-R}^RD_{s,y}u(t,x)dx.
$$
Therefore,
\[
\langle DF_R(t),v_R\rangle_{\HH}  = \frac{1}{\sigma^2_R}\int_0^t\int_\R\int_{-R}^R\int_{-R}^R p_{t-s}(x-y)\sigma(u(s,y))D_{s,y}u(t,x') dxd x'dyds.
\]
From (\ref{ecu1}), we know that
\begin{equation}\label{eq: D sy u}
\begin{split}
D_{s,y}u(t,x') =& p_{t-s} (x'-y)  \sigma(u(s,y)) \\
& + \int_s^t \int_{\R}     p_{t-r}(x'-z) \Sigma(r,z) D_{s,y} u(r,z) W(dr,dz)\,,
\end{split}
\end{equation}
where $\Sigma(r,z)$ is a bounded and adapted random field.
This produces the decomposition
\begin{equation}\label{eq: DF,v}
\begin{split}
\langle DF_R(t),v_R\rangle_{\HH}  =& 
 \frac{1}{\sigma^2_R}\int_0^t\int_\R  \left( \int_{-R}^R p_{t-s}(x-y)  dx \right)^2\sigma^2(u(s,y))dyds \\
 &+ \frac{1}{\sigma^2_R}\int_0^t\int_\R\int_{-R}^R\int_{-R}^R p_{t-s}(x-y)\sigma(u(s,y)) \\
 &\quad \times \left(
 \int_s^t \int_{\R}     p_{t-r}( \tilde{x} -z) \Sigma(r,z) D_{s,y} u(r,z) W(dr,dz) \right)d x d \tilde{x}dyds.
 \end{split}
 \end{equation}
 Therefore, using that   for any process $\Phi= \{\Phi(s), s\in [0,t]\}$ such that  $ \sqrt{{\rm Var} (\Phi_s)} $ is integrable on $[0,t]$,  we have
 \[
  \sqrt{ {\rm Var}  \left( \int_0^t \Phi_s ds \right)} \le \int_0^t  \sqrt{{\rm Var} (\Phi_s)} ds,
  \]
  we can write
 \[
 \sqrt{ {\rm Var} (\langle DF_R(t),v_R\rangle_{\HH} ) } \le A_1+A_2,
 \]
 where  
 \begin{eqnarray*}
&&  A_1=\frac{1}{\sigma^2_R} \int_0^t   \Bigg(   \int_{\R^2}   \left( \int_{-R}^R p_{t-s}(x-y)  dx \right)^2 \left( \int_{-R}^R p_{t-s}(x'-y' )  dx' \right)^2 \\
 && \qquad \times    {\rm Cov} \left(\sigma^2(u(s,y)),\sigma^2(u(s,y')) \right)dy dy'  \Bigg)^{\frac 12} ds
  \end{eqnarray*}
  and
   \begin{eqnarray*}
  A_2 &=&  \frac{1}{\sigma^2_R} \int_0^t \Bigg(   \int_{\R^2} \int_{[-R,R]^4}   p_{t-s}(x-y)p_{t-s}(x'-y' )     \int_s^t \int_{\R}  p_{t-r}(\tilde{x}-z)p_{t-r}(\tilde{x}'-z)  \\
&&   \times \E\left(
\sigma(u(s,y))\sigma(u(s,y'))\Sigma^2(r,z)   D_{s,y} u(r,z) D_{s,y'} u(r,z)\right) \\
&&\times  dz  dr dxdx'd\tilde{x}d\tilde{x}'dy  d{y}' \Bigg)^{\frac 12} ds \,.
 \end{eqnarray*}
 The proof will be done in two steps:
 
 \medskip
 \noindent {\it Step 1:} \quad 
 Let us first estimate the term $A_2$. Denote by $L$ the  Lipschitz constant of $\sigma$ and let, for $p\ge 2$, 
 \[
 K_p(t) =\sup_{0\le s\le t} \sup_{y\in \R} \| \sigma(u(s,y)) \|_p.
 \]
 Then, 
    \begin{eqnarray*}
&&  |\E(
\sigma(u(s,y))\sigma(u(s,y')) \Sigma^2(r,z)   D_{s,y} u(r,z) D_{s,y'} u(r,z))| \\
&& \qquad \le K_4^2(t) L^2 \| D_{s,y} u(r,z) \|_4 \| D_{s, y'} u(r,z) \|_4.
 \end{eqnarray*}
 We need to estimate  $\| D_{s,y} u(r,z) \|_p$ for any $p\ge 2$. According to \eqref{eq: D sy u}, for any $s \in [0,r]$, applying Burkholder's inequality yields
       \begin{align*}
     &   \| D_{s,y} u(r,z) \|_p  \le    p_{r-s}(z-y) K_p(t) \\
      &  \qquad +  c_p\left( E \left( \left |  \int_s^r \int_{\R} p^2_{r-r_1} (z-z_1)    \Sigma^2(r_1,z_1) |  D_{s,y} u(r_1,z_1)|^2 dr_1 dz_1 \right|^\frac p2\right) \right)^{\frac 1p}\\
    & \qquad \qquad    \le  p_{r-s}(z-y) K_p(t) \\
       &  \qquad  \qquad \ \ \   + L c_p\left(  \int_s^r \int_{\R} p^2_{r-r_1} (z-z_1)   \| D_{s,y} u(r_1,z_1) \|_p^2 dr_1 dz_1   \right)^{\frac 12},
      \end{align*}   
      which implies 
       \begin{eqnarray*}
     &&   \| D_{s,y} u(r,z) \|_p^2   \le  2p^2_{r-s}(z-y) K^2_p(t) \\
         &&  \qquad + 2L ^2c_p^2  \int_s^r \int_{\R} p^2_{r-r_1} (z-z_1)   \| D_{s,y} u(r_1,z_1) \|_p^2 dz_1 dr_1\,.
      \end{eqnarray*}  
By Lemma \ref{lemma: iteration}, we have  the estimate
      \begin{equation}
        \| D_{s,y} u(r,z) \|_p \le C  p_{r-s}(z-y)\,,  \label{e1}
        \end{equation}
        where the constant $C$ depends on $t$ and $p$.

     From (\ref{e1})  and Proposition \ref{pro:covariance}, we  derive the following estimate for the term $A_2$:
        \begin{eqnarray*}
  A_2 &\le &\frac{C}{R } \int_0^t \Bigg(   \int_{\R^2} \int_{[-R,R]^4}     \int_s^t  \int_{\R}   p_{t-s}(x-y)p_{t-s}(x'-y' )  p_{t-r}(\tilde{x}-z)     \\
&&  \times   p_{t-r}(\tilde{x}'-z)     p_{r-s}(z-y)p_{r-s}(z-y')   dz   dr   dxdx'd\tilde{x} d\tilde{x}'    dy dy'    \Bigg)^{\frac 12}ds \,.
 \end{eqnarray*}
 Integrating $\tilde{x}, \tilde{x}'$ over $\R$, then integrating $y', y$ over $\R$ and using the semigroup property,  we obtain
         \begin{eqnarray*}
 A_2 &\le & \frac{C}{R } \int_0^t \left(     \int_{[-R,R]^2}     \int_s^t   \int_{\R}  p_{t+r-2s} (x-z) p_{t+r-2s}(x'-z)   dz   dr     dxdx'     \right)^{\frac 12}ds\\
 &\le & \frac{C}{R } \int_0^t \left(     \int_{[-R,R]^2}     \int_s^t    p_{2t+2r-4s} (x -x')      dr     dxdx'     \right)^{\frac 12}ds.
  \end{eqnarray*} 
 Finally, integrating 
  $x$ over $\R$ and   $x'$ over $[-R, R]$, we get 
 \begin{equation*}
 A_2 \leq \frac{C}{\sqrt{R}}\,.
 \end{equation*}

\medskip
 \noindent {\it Step 2:} \quad 
 To estimate the term $A_1$ we need a bound for the covariance 
 \[
  {\rm Cov} \left(\sigma^2(u(s,y)),\sigma^2(u(s,y')) \right)\,.
  \]
  Here, the main idea is to use  a version of Clark-Ocone formula for two-parameter processes to write
  \[
  \sigma^2(u(s,y)) = \E[\sigma^2(u(s,y))] +  \int_0^s \int_{\R} \E[ D_{r,z} (\sigma^2(u(s,y))) | \mathcal{F}_r ] W(dr,dz).
  \]
  Then,
       \begin{eqnarray*}
        &&   {\rm Cov} (\sigma^2(u(s,y)),\sigma^2(u(s,y')) ) \\
        &&\quad =   \int_0^s \int_{\R} \E \left[ \E[ D_{r,z} (\sigma^2(u(s,y))) | \mathcal{F}_r ] \E[ D_{r,z} (\sigma^2(u(s,y'))) | \mathcal{F}_r ]  \right] dzdr .
                 \end{eqnarray*}
                 Applying  the chain rule for Lipschitz functions (see \cite[Proposition 1.2.4]{Nualart}), we have
                 \[
                 D_{r,z} (\sigma^2(u(s,y)))  =2 \sigma(u(s,y)) \Sigma(s,y) D_{r,z} u(s,y).
                 \]
                 and
                 \[
              \left\|  \E[ D_{r,z} (\sigma^2(u(s,y))) | \mathcal{F}_r ]    \right\| _2 \le 2K_4(t) L  \left\| D_{r,z} u(s,y)\right\|_4.
              \]
                    Then, using (\ref{e1}), we can write
        \begin{eqnarray*}
        &&   \left|{\rm Cov} \left(\sigma^2(u(s,y)),\sigma^2(u(s,y')) \right)\right| \\
        &&\quad \le   4L^2 K_4^2(t)    \int_0^s \int_{\R}   \left\| D_{r,z} u(s,y)\right\|_4 \left\| D_{r,z} u(s,y')\right\|_4 dzdr \\
        && \quad \le C  \int_0^s \int_{\R}    p_{s-r}(z-y) p_{s-r}(z-y') dzdr\\
          && \quad = C  \int_0^s     p_{2s-2r}(y-y') dr  .      
                 \end{eqnarray*}
                 Therefore,
                  \begin{eqnarray*}
&&  A_1 \le \frac C R  \int_0^t  \Bigg(   \int_{\R^2}   \left( \int_{-R}^R p_{t-s}(x-y)  dx \right)^2 \left( \int_{-R}^R p_{t-s}(x'-y' )  dx' \right)^2 \\
 && \qquad \times     \int_0^s     p_{2s-2r}(y-y') dr  dy dy'  \Bigg)^{\frac 12}  ds\\
 &&  \le \frac C R  \int_0^t   \Bigg(   \int_0^s  \int_{\R^2}   \int_{ [-R,R] ^4}   p_{t-s}(x-y)   p_{t-s}(\tilde{x} -y)  p_{t-s}(x' -y' ) p_{t-s}(\tilde{x}'  -y' ) \\
 && \qquad \times  p_{2s-2r}(y-y')  dx d\tilde{x} dx' d\tilde{x}'   dy dy'  dr  \Bigg)^{\frac 12}ds.
  \end{eqnarray*}
  Again, integrate $\tilde{x}$ and $\tilde{x}'$ over $\R$, then integrate $y$ and $y'$    over $\R$ using the semigroup property, to obtain
\[
  A_1 \le   \frac C R  \int_0^t   \left(   \int_0^s     \int_{ [-R,R] ^2}     p_{2t-2r}  (x-x') dxdx' dr    \right)^{\frac 12}ds.
\]
  Finally,  integrating $x$ over $\R$ and $x'$ from $-R$ to $R$, we obtain  
    \[
  A_1 \le  \frac C {\sqrt{R}}.
  \] 
 This completes the proof of Theorem  \ref{thm:TV-distance}. 
 \end{proof}

\section{Proof of Theorem \ref{thm:functional-CLT}}
\label{sec:thm2}
We begin with the following result that ensures tightness. 

\begin{proposition}
\label{pro:tightness}
Let $u(t,x)$ be the solution to equation \eqref{eq:heat-equation}. Then for any $0\leq s < t\leq T$ and any $p\geq 1$ there exists a constant $C=C(p,T)$ such that 
$$
\E \left(\left|\int_{-R}^R u(t,x)dx - \int_{-R}^Ru(s,x)dx\right|^p \right) \leq CR^{\frac{p}{2}}(t-s)^{\frac{p}{2}}.
$$
\end{proposition}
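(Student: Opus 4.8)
The plan is to write the increment as a single Walsh stochastic integral and then bound its $p$-th moment by the deterministic $L^2$ norm of its kernel, exactly as in Step~1 of the proof of Theorem~\ref{thm:TV-distance}. Subtracting the two mild formulations \eqref{eq: mild}, splitting the time integral defining $u(t,x)$ over $[0,s]$ and $[s,t]$, and applying the stochastic Fubini theorem, I would obtain
\[
\int_{-R}^R u(t,x)\,dx-\int_{-R}^R u(s,x)\,dx=\int_0^t\int_\R \Psi_R(r,y)\,\sigma(u(r,y))\,W(dr,dy),
\]
with the deterministic kernel
\[
\Psi_R(r,y)=\mathbf{1}_{[0,s]}(r)\int_{-R}^R\big(p_{t-r}-p_{s-r}\big)(x-y)\,dx+\mathbf{1}_{[s,t]}(r)\int_{-R}^R p_{t-r}(x-y)\,dx .
\]
Since $\Psi_R(r,y)\sigma(u(r,y))$ is adapted and jointly measurable, Burkholder's inequality, Minkowski's integral inequality and the uniform bound $K_p(T)=\sup_{r\le T,\,y\in\R}\|\sigma(u(r,y))\|_p<\infty$ give, for $p\ge2$,
\[
\E\Big[\Big|\int_{-R}^R(u(t,x)-u(s,x))\,dx\Big|^p\Big]\le c_p\,K_p(T)^p\,\Big(\int_0^t\int_\R \Psi_R(r,y)^2\,dy\,dr\Big)^{p/2},
\]
the range $p\in[1,2)$ following from the case $p=2$ by Jensen's inequality. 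It then remains to show that $I_R:=\int_0^t\int_\R \Psi_R^2\,dy\,dr\le CR(t-s)$. As the two indicators have disjoint support, $I_R=I_1+I_2$, with $I_2$ the contribution of $[s,t]$ and $I_1$ that of $[0,s]$.

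The term $I_2$ is immediate: expanding the square and integrating out $y$ by the semigroup property \eqref{eq:heat_combination},
\[
\int_\R\Big(\int_{-R}^R p_{t-r}(x-y)\,dx\Big)^2dy=\int_{[-R,R]^2}p_{2(t-r)}(x-x')\,dx\,dx'\le 2R,
\]
so that $I_2\le 2R(t-s)$.

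The term $I_1$ is the crux. Writing $\psi_r(y)=\int_{-R}^R(p_{t-r}-p_{s-r})(x-y)\,dx$ and $\gamma(r)=\int_\R|p_{t-r}(w)-p_{s-r}(w)|\,dw$, I would use $\int_\R\psi_r^2\,dy\le\|\psi_r\|_\infty\|\psi_r\|_{L^1}$: here $\|\psi_r\|_\infty\le\gamma(r)$, while Fubini gives $\|\psi_r\|_{L^1}\le\int_{-R}^R\big(\int_\R|p_{t-r}-p_{s-r}|(x-y)\,dy\big)dx=2R\,\gamma(r)$, whence $\int_\R\psi_r^2\,dy\le2R\,\gamma(r)^2$ and $I_1\le2R\int_0^s\gamma(r)^2\,dr$. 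To estimate $\gamma$, besides the trivial $\gamma(r)\le2$, I would use $p_{t-r}-p_{s-r}=\int_{s-r}^{t-r}\tfrac12\partial_w^2 p_v\,dv$ together with $\int_\R|\partial_w^2 p_v(w)|\,dw=C v^{-1}$ to get $\gamma(r)\le C\log\frac{t-r}{s-r}\le C\,(t-s)/(s-r)$; hence $\gamma(r)^2\le C\min\{1,((t-s)/(s-r))^2\}$. Substituting $w=s-r$ and splitting the integral at $w\sim t-s$ then yields $\int_0^s\gamma(r)^2\,dr\le C(t-s)$, so that $I_1\le CR(t-s)$.

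Combining the two bounds gives $I_R\le CR(t-s)$ and hence the assertion. The main obstacle is the control of $I_1$, that is, the contribution of the difference $p_{t-r}-p_{s-r}$ near the singular endpoint $r=s$, where $\gamma(r)$ is not small. What makes the estimate close with the clean linear factor $(t-s)$, and no spurious logarithm, is that only the \emph{square} $\int_0^s\gamma(r)^2\,dr$ enters: $((t-s)/(s-r))^2$ is integrable in $r$ down to the scale $s-r\sim t-s$, which produces exactly one factor of $(t-s)$.
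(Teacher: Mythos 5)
Your proposal is correct, and its skeleton coincides with the paper's: represent the increment as a single Walsh integral via stochastic Fubini, apply Burkholder--Davis--Gundy and Minkowski together with the uniform moment bound on $\sigma(u(r,y))$, and reduce everything to the deterministic kernel estimate $\int_0^t\int_\R \Psi_R^2\,dy\,dr\le CR(t-s)$, split into the contributions $I_1$ (from $[0,s]$) and $I_2$ (from $[s,t]$). Where you genuinely diverge is in how this kernel estimate is proved. The paper passes to Fourier variables: by Plancherel the $x$-integration produces the factor $\sin^2(R\xi)/\xi^2$, the heat kernels become exponentials, the $r$-integral is computed explicitly, and the elementary inequalities $|1-e^{-a}|\le\sqrt a$ (for $I_1$) and $1-e^{-a}\le a$ (for $I_2$) combined with $\int_\R \sin^2(R\xi)\xi^{-2}\,d\xi=CR$ finish the proof in a few lines. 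You instead stay in physical space: for $I_2$ you use the semigroup property \eqref{eq:heat_combination} to get the bound $2R$ directly, and for $I_1$ you use the H\"older bound $\int\psi_r^2\le\|\psi_r\|_\infty\|\psi_r\|_{L^1}\le 2R\,\gamma(r)^2$ with $\gamma(r)=\|p_{t-r}-p_{s-r}\|_{L^1}$, then control $\gamma$ via the heat equation and the estimate $\|\partial_w^2 p_v\|_{L^1}=Cv^{-1}$, giving $\gamma(r)\le C\min\{1,(t-s)/(s-r)\}$, whose square integrates to $C(t-s)$ after splitting at the scale $s-r\sim t-s$. Both arguments are complete and yield the same bound; the paper's Fourier route is shorter and mechanical once Plancherel is invoked, while yours is self-contained in real space (useful in situations where Plancherel is unavailable or the spatial covariance is not translation-friendly) at the cost of the slightly more delicate dyadic-type splitting of the time integral. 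A minor point in your favor: you state explicitly that the case $1\le p<2$ follows from $p=2$ by Jensen, which the paper leaves implicit.
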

 
\begin{proof}
Let us assume that $s < t$. Recall that 
$$
u(t,x) = 1+ \int_0^t \int_\R  p_{t-r}(x-y)\sigma(u(r,y))W(dr,dy), 
$$
and thus
\begin{align*}
&\int_{-R}^Ru(t,x)dx - \int_{-R}^Ru(s,x)dx\\
&\qquad  =
\int_0^T  \int_\R \int_{-R}^R dx  \left(p_{t-r}(x-y)\1_{\{r\leq t\}} - p_{s-r}(x-y)\1_{\{r\leq s\}}\right)\sigma(u(r,y))W(dr,dy).
\end{align*}
Moreover, recall that $\E ( | u(s,y)|^p)$ is bounded on $s\leq T$ and $y\in \R$  for any $p\geq 1$. 
Using Burkholder-Davis-Gundy inequality, we can write
\begin{equation*}
\begin{split}
&\E\left(\left|\int_{-R}^R u(t,x)dx - \int_{-R}^Ru(s,x)dx\right|^p \right)\\
\leq &c_p\E \left( \int_0^T \int_\R   \left(\int_{-R}^R \left(p_{t-r}(x-y)\1_{\{r\leq t\}} - p_{s-r}(x-y)\1_{\{r\leq s\}}\right)dx \right)^2 \sigma(u(r,y))^2 dy dr\right)^{\frac{p}{2}}\\
\leq&c_p \left(\int_0^T \int_\R  \left(\int_{-R}^R \left(p_{t-r}(x-y)\1_{\{r\leq t\}} - p_{s-r}(x-y)\1_{\{r\leq s\}}\right)dx\right)^2 \|\sigma(u(r,y))\|_p^2 dy dr \right)^{\frac{p}{2}}\\
\leq & C_{p,T}\left(\int_0^T \int_\R  \left(\int_{-R}^R \left(p_{t-r}(x-y)\1_{\{r\leq t\}} - p_{s-r}(x-y)\1_{\{r\leq s\}}\right)dx\right)^2 dy dr \right)^{\frac{p}{2}}\,.
\end{split}
\end{equation*}
Thus it suffices to prove  that
\begin{equation}
\label{eq:needed}
\int_0^T \int_\R  \left(\int_{-R}^R \left(p_{t-r}(x-y)\1_{\{r\leq t\}} - p_{s-r}(x-y)\1_{\{r\leq s\}}\right)dx\right)^2 dy dr \leq CR(t-s)\,.
\end{equation}
Using Fourier transform we have
\begin{align*}
&\int_0^T \int_\R  \left(\int_{-R}^R \left(p_{t-r}(x-y)\1_{\{r\leq t\}} - p_{s-r}(x-y)\1_{\{r\leq s\}}\right)dx\right)^2 dy dr\\
=&C \int_0^T \int_{\RR} \left( \int_{-R}^R e^{i\xi x} dx \right)^2  \left(e^{-\frac{t-r}{2}|\xi|^2} \1_{\{r\leq t\}} - e^{-\frac{s-r}{2}|\xi|^2} \1_{\{r\leq s\}} \right)^2d\xi dr\\
=&C \int_0^T \int_{\RR} \frac{\sin^2(R |\xi|)}{|\xi|^2}  \left(e^{-\frac{t-r}{2}|\xi|^2} \1_{\{r\leq t\}} - e^{-\frac{s-r}{2}|\xi|^2} \1_{\{r\leq s\}} \right)^2d\xi dr\\
= & C \int_0^s \int_{\RR} \left(e^{-\frac{t-r}{2}|\xi|^2} - e^{-\frac{s-r}{2}|\xi|^2} \right)^2 \frac{\sin^2(R|\xi|)}{|\xi|^2} d\xi dr\\
& + C \int_s^t \int_{\RR} e^{-(t-r)|\xi|^2} \frac{\sin^2(R|\xi|)}{|\xi|^2} d\xi dr\\
:=& C(I_1+ I_2)\,.
\end{align*}
For $I_1$ we can write
\begin{align*}
I_1 &= \int_0^s \int_{\RR} e^{r|\xi|^2} \left(e^{-\frac{t}{2}|\xi|^2} - e^{-\frac{s}{2}|\xi|^2} \right)^2 \frac{\sin^2(R|\xi|)}{|\xi|^2} d\xi dr \\
& = \int_{\RR} \frac{1-e^{-s|\xi|^2}}{|\xi|^2} \left(e^{-\frac{t-s}{2}|\xi|^2} -1 \right)^2 \frac{\sin^2(R|\xi|)}{|\xi|^2} d\xi \,.
\end{align*}
Using the bound $|1-e^{-a}|\leq \sqrt{a}$ for all $a\geq 0$ in the above parenthesis, we obtain that 
\begin{align*}
I_1\leq \int_{\RR} \frac{1}{|\xi|^2} |\xi|^2 |t-s| \frac{\sin^2(R|\xi|)}{|\xi|^2} d\xi  \leq C R |t-s|\,.
\end{align*}
For $I_2$, using the bound $1-e^{-a}\leq a$ for any $a\geq 0$, 
\begin{align*}
I_2 = \int_{\RR} \frac{1-e^{-(t-s)|\xi|^2}}{|\xi|^2} \frac{\sin^2(R|\xi|)}{|\xi|^2} d\xi \leq (t-s)\int_{\RR}\frac{\sin^2(R|\xi|)}{|\xi|^2} d\xi = C R (t-s)\,.
\end{align*}
The proof is finished by combining $I_1$ and $I_2$. 
\end{proof}

\begin{proof}[Proof of Theorem \ref{thm:functional-CLT}]
It suffices to prove the convergence of the finite-dimensional distributions and tightness.  However, the latter follows directly from Proposition \ref{pro:tightness}.

In order to show the convergence of the finite-dimensional distributions, fix points $0\le t_1< \cdots <t_m \le T$ and consider the random variables
\[
F_R^{(i)}  = \frac 1 {\sqrt{R}} \left(\int_{-R}^R u(t_i,x)dx - 2R\right),
\]
for $i=1, \dots, m$.  We can write $F_R^{(i)}  = \delta( v_R^{(i)}) $, where
$$
v_R^{(i)} (s,y) =\mathbf{1} _{[0,t_i]}(s) \frac{1}{\sqrt{R} }\int_{-R}^R p_{t_i-s}(x-y)\sigma(u(s,y))dx.
$$
Set $F_R=( F^{(1)}_R, \dots, F_R^{(m)})$ and let $Z$ be an $m$-dimensional Gaussian centered vector with covariance
\[
C_{i,j} :=\E[Z^iZ^j]  = \int _0^{t_i \wedge t_j}  \xi(r)  dr,
\]
where we recall that $\xi(r)= \E[ \sigma(u(r,x))^2]$.
Then, applying Proposition \ref{lemma: NP 6.1.2},
for any  $C^2$ function $h: \R^m \rightarrow \R$ with bounded second partial derivatives, we have
\[
| \E(h(F_R)) -\E(h(Z)) | \le \frac 12 \|h ''\|_\infty \sqrt{   \sum_{i,j=1}^m   \E \left[ \left(C_{i,j} - \langle DF_R^{(i)}, v_R^{(j)} \rangle_{\HH}\right)^2 \right] }.
\]
Then, it suffices to show that for each $i,j$,   $\langle DF_R^{(i)}, v_R^{(j)} \rangle_{\HH}$ converges in $L^2$, as $R$ tends to infinity to  $C_{i,j} $. To be more precisely, similarly with \eqref{eq: DF,v}, we have 
\begin{equation}\label{eq: DFi,vj}
\begin{split}
& \langle DF_R^{(i)},v_R^{(j)}\rangle_{\HH}\\
  =& 
 \frac{1} {R}  \int_0^{t_i\wedge t_j}\int_\R   \int_{-R}^R\int_{-R}^R p_{t_i-s}(x-y)   p_{t_j-s}(\tilde{x}-y)  \sigma^2(u(s,y))dx d\tilde{x}dyds \\
 &+ \frac{1} {R} \int_0^{t_i\wedge t_j}\int_\R\int_{-R}^R\int_{-R}^R p_{t_j-s}(\tilde{x}-y)\sigma(u(s,y)) \\
 &\quad \times \left(
 \int_s^{t_i} \int_{\R}     p_{t_i-r}(x-z) \Sigma(r,z)  D_{s,y} u(r,z) W(dr,dz) \right)dxd \tilde{x}dyds\\
:= & I_{1,i,j}(R) + I_{2, i, j}(R) \,.
 \end{split}
 \end{equation}
Then we obtain that 
\begin{align*}
\E \left[\left(C_{ij} - \langle D F_{R}^{(i)}, v_R^{(j)}\rangle_{\HH}\right)^2\right]
\leq 2\E \left(C_{ij} - I_{1,i,j}(R) \right)^2 + 2\E \left(I_{2,i,j}(R)^2\right)\,.
\end{align*}
By noting that 
\begin{equation*}
C_{ij} = \lim_{R \to \infty} \E( I_{1,i,j}(R))\,,
\end{equation*}
and using   arguments similar  as  those in the proof of Theorem \ref{thm:TV-distance}, we can show that $\E \left[\left(C_{ij} - \langle D F_{R}^{(i)}, v_R^{(j)}\rangle_{\HH}\right)^2\right] \to 0$ as $R \to \infty$. The proof is finished. 

\medskip

\end{proof}
  
 \section{Appendix}
 Here we prove a technical lemma which is used in the proof of Theorem \ref{thm:TV-distance}. 
 \begin{lemma}\label{lemma: iteration}
 Let $\|D_{s,y}u(r,z)\|_p^2$ satisfy 
 \begin{equation}\label{eq: Du satisfy}
  \| D_{s,y} u(r,z) \|_p^2   \le  Cp^2_{r-s}(z-y)   + C   \int_s^r \int_{\R} p^2_{r-r_1} (z-z_1)   \| D_{s,y} u(r_1,z_1) \|_p^2 dz_1 dr_1 \,,
 \end{equation}
 for any $0 < s < r\leq t$ and $y, z \in \R$. Then we have
 \begin{equation*}
  \| D_{s,y} u(r,z) \|_p \le C_{t,p}  p_{r-s}(z-y)\,.
 \end{equation*}
 for some constant $C_{t,p}$ which depends on $t$ and $p$. 
  \end{lemma}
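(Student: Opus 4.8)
The plan is to iterate the hypothesis \eqref{eq: Du satisfy}, recognise the resulting expression as a Neumann (Picard) series, and sum it explicitly; the essential input is a scaling identity for the square of the heat kernel. Throughout, fix $s$ and $y$ and abbreviate $\phi(r,z) := \|D_{s,y}u(r,z)\|_p^2$, so that \eqref{eq: Du satisfy} reads $\phi \le g_0 + T\phi$, where $g_0(r,z) = C\,p^2_{r-s}(z-y)$ and $T$ is the linear operator $(T\psi)(r,z) = C\int_s^r\int_\R p^2_{r-r_1}(z-z_1)\psi(r_1,z_1)\,dz_1\,dr_1$. A direct computation gives the key identity $p_\tau^2(x) = \frac{1}{2\sqrt{\pi\tau}}\,p_{\tau/2}(x)$, which converts each spatial factor $p^2_{r-r_1}(z-z_1)$ into a genuine heat kernel $p_{(r-r_1)/2}(z-z_1)$ up to the scalar $\frac{1}{2\sqrt{\pi(r-r_1)}}$. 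This is what makes the convolutions collapse cleanly.

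First I would iterate the inequality $N$ times to obtain $\phi \le \sum_{n=0}^{N}T^n g_0 + T^{N+1}\phi$, and then compute $T^n g_0$ in closed form by induction. At each step the spatial integral collapses via the semigroup property \eqref{eq:heat_combination}, leaving the running spatial kernel $p_{(r-s)/2}(z-y)$, while the time integral is a Beta integral $\int_s^r (r-r_1)^{-1/2}(r_1-s)^{\alpha}\,dr_1 = (r-s)^{\alpha+\frac12}B(\tfrac12,\alpha+1)$. Telescoping the resulting ratios of Gamma functions yields
\[
T^n g_0(r,z) = \frac{C^{n+1}}{2^{n+1}\,\Gamma(\tfrac{n+1}{2})}\,(r-s)^{\frac{n-1}{2}}\,p_{(r-s)/2}(z-y).
\]

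Next I would sum the series. Factoring out the most singular ($n=0$) contribution gives
\[
\sum_{n=0}^{\infty}T^n g_0(r,z) = (r-s)^{-1/2}\,p_{(r-s)/2}(z-y)\,\frac{C}{2}\sum_{n=0}^{\infty}\frac{\big(\tfrac{C}{2}\sqrt{r-s}\big)^n}{\Gamma(\tfrac{n+1}{2})}.
\]
Since $\Gamma(\tfrac{n+1}{2})$ grows faster than any geometric factor, the last series defines an entire function of $\sqrt{r-s}$, hence is bounded by a constant depending only on $t$ and $p$ whenever $r-s\le t$. Using $(r-s)^{-1/2}p_{(r-s)/2}(z-y) = 2\sqrt{\pi}\,p^2_{r-s}(z-y)$, this bounds $\sum_{n\ge0}T^n g_0$ by $C_{t,p}^2\,p^2_{r-s}(z-y)$, which is precisely the claimed estimate once the remainder has been discarded.

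The main obstacle is controlling the remainder $T^{N+1}\phi$, i.e. justifying that the iteration may be carried to infinity. To pass from $\phi \le \sum_{n=0}^{N}T^n g_0 + T^{N+1}\phi$ to the conclusion I must show $T^{N+1}\phi \to 0$, which requires a crude a priori finiteness bound $\phi(r,z)\le M\,p^2_{r-s}(z-y)$ with some (possibly large) constant $M$. One way to secure this is to integrate \eqref{eq: Du satisfy} in $z$, use $\int_\R p^2_{r-r_1}(z-z_1)\,dz = \frac{1}{2\sqrt{\pi(r-r_1)}}$, and apply a weakly singular (fractional) Gronwall lemma to the resulting Volterra inequality; alternatively one can exploit that $u$ is built by Picard iteration, so the partial sums above bound the Malliavin derivatives of the iterates uniformly and the bound passes to the limit. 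Since $T$ is monotone (its kernel is nonnegative) and $g_0 = C\,p^2_{\cdot-s}$, such a bound gives $T^{N+1}\phi \le \tfrac{M}{C}\,T^{N+1}g_0$, and $T^{N+1}g_0$ is the term of index $N+1$ of the convergent series above, hence tends to $0$. This finiteness/uniformity step is the only genuinely delicate point; the algebra of the preceding steps is routine once the heat-kernel scaling identity is in hand.
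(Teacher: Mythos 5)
Your proposal follows essentially the same route as the paper's proof: iterate the Volterra inequality, collapse the iterated squared heat kernels via $p_\tau^2 = (2\sqrt{\pi\tau})^{-1}p_{\tau/2}$ and the semigroup property into Beta/Gamma factors, sum the resulting series whose terms decay like $1/\Gamma(\frac{n+1}{2})$, and show the remainder vanishes. The only difference is that you are more explicit about the a priori finiteness needed to discard the remainder $T^{N+1}\phi$ (the paper simply asserts that this term tends to zero), so your write-up is, if anything, more careful on that point.
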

  \begin{proof}
  By iterating \eqref{eq: Du satisfy} we have 
  
 \begin{align*}
& \|D_{s,y}u(r,z)\|_p^2\\
 \leq & C p_{r-s}^2(z-y) + C^2 \int_s^r\int_{\R} p_{r-r_1}^2(z-z_1) p_{r_1-s}^2(z_1-y) dz_1 dr_1\\
 & + \cdots  + C^n \int_s^r\int_{\R} \int_{s}^{r_1} \int_{\R}\cdots \int_s^{r_{n-1}} \int_{\R} p_{r-r_1}^2(z-z_1)p_{r_1-r_2}^2(z_1-z_2)  \times  \cdots \\
 &\quad  \times p_{r_{n-1} - r_{n}}^2(z_{n-1}-z_n) p_{r_n-s}^2(z_n-y)  dz_n dr_n \cdots dz_1 dr_1  \\
& +  \int_s^r\int_{\R} \int_{s}^{r_1} \int_{\R}\cdots \int_s^{r_{n}} \int_{\R} p_{r-r_1}^2(z-z_1)p_{r_1-r_2}^2(z_1-z_2)\times  \cdots \\
 &\quad  \times p_{r_{n-1} - r_{n}}^2(z_{n-1}-z_n) p_{r_n-r_{n+1}}^2(z_n-z_{n+1})\\
 & \quad \times \|D_{s,y}u(r_{n+1}, z_{n+1})\|_p^2 dz_{n+1} dr_{n+1}  \cdots dz_1 dr_1\,.
 \end{align*}
Then, using the fact that 
\[
p^2_{r-s} (z)   =\frac {C}{\sqrt{  r-s}}  p_{\frac {r-s}2} (z) \,,
\] 
and 
$$
\int_{\R} p_s(x-y)p_t(y-z) dy = p_{s+t}(x-z)\,,
$$
we have for the integrals of the product of heat kernels above, 
\begin{align*}
&\int_s^r\int_{\R} \int_{s}^{r_1} \int_{\R}\cdots \int_s^{r_{n-1}} \int_{\R} p_{r-r_1}^2(z-z_1)p_{r_1-r_2}^2(z_1-z_2)  \times  \cdots \\
 &\quad  \times p_{r_{n-1} - r_{n}}^2(z_{n-1}-z_n) p_{r_n-s}^2(z_n-y)  dz_n dr_n \cdots dz_1 dr_1\\
=& C^n \int_s^r \int_s^{r_1}\cdots \int_s^{r_{n-1}} \int_{\R^n} \frac{1}{\sqrt{r-r_1}}\frac{1}{\sqrt{r_1-r_2}} \cdots \frac{1}{\sqrt{r_{n-1}-r_n}} \frac{1}{\sqrt{r_n-s}}\\
& \times p_{\frac{r-r_1}{2}}(z-z_1)p_{\frac{r_1-r_2}{2}}(z_1-z_2)  \times \cdots  \times p_{\frac{{r_{n-1} - r_{n}}}{2}}(z_{n-1}-z_n) p_{\frac{r_n-s}{2}}(z_n-y) \\
&\times dz_n\cdots dz_1 dr_n \cdots dr_1\\
= & C^n \int_s^r \int_s^{r_1}\cdots \int_s^{r_{n-1}} \frac{1}{\sqrt{r-r_1}}\frac{1}{\sqrt{r_1-r_2}} \cdots \frac{1}{\sqrt{r_{n-1}-r_n}} \frac{1}{\sqrt{r_n-s}} p_{\frac{r-s}{2}}(z-y) dr_n \cdots dr_1\\
= & (r-s)^{\frac{n-1}{2}} \frac{\Gamma(\frac{1}{2})^{n+1}}{\Gamma(\frac{n+1}{2})} p_{\frac{r-s}{2}}(z-y)\\
= & (r-s)^{\frac{n}{2}}  \frac{\Gamma(\frac{1}{2})^{n+1}}{\Gamma(\frac{n+1}{2})} p_{r-s}^2 (z-y)\,.
\end{align*}
Thus we obtain 
\begin{align*}
&\|D_{s,y}u(r,z)\|_p^2 \leq \left( \sum_{j=0}^n C^j(r-s)^{\frac{j}{2}}\frac{\Gamma(\frac{1}{2})^{j+1}}{\Gamma(\frac{j+1}{2})}\right) p^2_{r-s}(z-y)\\
& \quad + C^n (r-s)^{\frac{n}{2}} \frac{\Gamma(\frac{1}{2})^{n+1}}{\Gamma(\frac{n+1}{2})} \\
& \quad \times \int_s^r \int_{\R}  p^2_{r-r_{n+1}}(z-z_{n+1}) \|D_{s,y}u(r_{n+1}, z_{n+1})\|_p^2 dz_{n+1} dr_{n+1}\,.
\end{align*}
Taking into account that the first term on the right-hand side is a convergent series and the second term tends to 0 as $n \to \infty$, the proof is finished. 
  \end{proof}

\end{document}